\documentclass{article}
\usepackage[utf8]{inputenc}
\usepackage{hyperref}
\hypersetup{
    colorlinks=true,
    linkcolor=blue,
    citecolor=blue,      
    urlcolor=blue,
}
\usepackage{amsfonts}
\usepackage{amsrefs}
\usepackage{mathrsfs,amsmath}
\usepackage{amssymb}
\usepackage{titlesec}
\usepackage{graphicx}
\usepackage{wrapfig}
\usepackage{geometry}
\usepackage{amsthm}
\usepackage{commath}
\usepackage{color}
\newtheorem{theorem}{Theorem}[section]
\newtheorem*{remark}{Remark}
\newtheorem{lemma}[theorem]{Lemma}
\newtheorem*{conjecture}{Conjecture}

\title{Flip Distance and Triangulations of a Ball}
\author{Zili Wang}

\begin{document}
\maketitle

\begin{abstract}
   It is known that the flip distance between two triangulations of a convex polygon is related to the minimum number of tetrahedra in the triangulation of some polyhedron. It is interesting to know whether these two numbers are the same. In this work, we find examples to show that the two numbers are different in nature, and their ratio can be arbitrarily close to $\frac{3}{2}$.
\end{abstract}

\section{Introduction}

Let $\Sigma$ be a convex polygon, and $\mathcal{T}$ be a triangulation of $\Sigma$. Fix a diagonal $d$ of $\mathcal{T}$. Then $d$ is a diagonal of some convex quadrilateral in $\mathcal{T}$. A \textit{flip} of $d$ is the replacement of $d$ by the other diagonal in this quadrilateral. Two triangulations of $\Sigma$ differ by a flip if one of them is obtained by flipping a diagonal in the other. Flips between triangulations in a convex polygon have another interpretation as rotations of binary trees \cite{STT}. Although not ellaborated in this article, flips are generalized and defined on triangulations of topological surfaces \cite{GL}\cite{PP} and higher dimensional polytopes \cite{LRS}\cite{Pa}.

Given two triangulations $\mathcal{T}^+,\mathcal{T}^-$ of the convex polygon $\Sigma$, a \textit{flip path} from $\mathcal{T}^+$ to $\mathcal{T}^-$ is a sequence of triangulations starting at $\mathcal{T}^+$ and stopping at $\mathcal{T}^-$ such that every two consecutive triangulations differ by a flip. The length of a shortest flip path from $\mathcal{T}^+$ to $\mathcal{T}^-$ is their \textit{flip distance}, denoted by $\text{flip}(\mathcal{T}^+,\mathcal{T}^-)$. 

It is well-known that flip paths are related to triangulations of a ball. Note that $\mathcal{T}^+$ and $\mathcal{T}^-$ determine a topological triangulation $\tau$ of the sphere, obtained by placing $\mathcal{T}^+$ on top of $\mathcal{T}^-$ and gluing them along the boundary of $\Sigma$. A flip path from $\mathcal{T}^+$ to $\mathcal{T}^-$ determines a topological triangulation of the ball $\mathscr{T}$ whose boundary is $\tau$. We call $\mathscr{T}$ a \textit{tetrahedral decomposition extending} $\tau$ (see Section 2 and \cite{STT} for a more detailed description). 

It is then natural to quest for the relationship between the shortest flip paths from $\mathcal{T}^+$ to $\mathcal{T}^-$ and the \textit{minimal} tetrahedral decompositions extending $\tau$, where the number of tetrahedra is the smallest possible.  As the former gives rise to a tetrahedral decompositions extending $\tau$,  $\text{flip}(\mathcal{T}^+,\mathcal{T}^-)$ is no less than $\text{tet}(\tau)$, the number of tetrahedra in a minimal decomposition extending $\tau$. However, not much else is known about the relationship between the two numbers.

In \cite{STT} and an unpublished work by Claire Mathieu and William Thurston, the authors made an attempt to estimate $\text{flip}(\mathcal{T}^+,\mathcal{T}^-)$ using the Gromov norm of simplicial $3$-chains. The idea is to find the minimum $L^1$-norm of a $3$-chain whose boundary is $\tau$ (as a $2$-cycle), which converts the question into a problem in linear programming. The number $\text{tet}(\tau)$ is between this minimum norm and $\text{flip}(\mathcal{T}^+,\mathcal{T}^-)$, hence should be closer to $\text{flip}(\mathcal{T}^+,\mathcal{T}^-)$ than this minimum norm is.

In this work, we prove a result in the relationship between the two numbers:

\begin{theorem}\label{thm}
There exist triangulations $\mathcal{T}^+,\mathcal{T}^-$ of a convex polygon such that $\frac{\text{flip}(\mathcal{T}^+,\mathcal{T}^-)}{\text{tet}(\tau)}$ is arbitrarily close to $\frac{3}{2}$.
\end{theorem}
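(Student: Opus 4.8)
The plan is to produce, for every large $m$, an explicit pair $\mathcal{T}^+,\mathcal{T}^-$ of triangulations of a convex polygon $\Sigma$ with roughly $2m$ vertices, built by arranging $m$ copies of a fixed local gadget cyclically around $\Sigma$, and to show that the induced sphere $\tau$ satisfies $\text{tet}(\tau)=2m+O(1)$ while $\text{flip}(\mathcal{T}^+,\mathcal{T}^-)=3m+O(1)$, so that the ratio tends to $\tfrac32$. The gadget should be chosen so that reconciling one copy of it by flips genuinely costs three moves, whereas the corresponding slice of the $3$-ball can be filled by only two tetrahedra. Conceptually: every flip path produces a \emph{layered} triangulation of the ball, and the point of the construction is that $\tau$ admits a much more economical \emph{non-layered} triangulation that no flip path can match --- which is exactly why $\text{flip}$ and $\text{tet}$ end up being different ``in nature''.

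The lower bound $\text{tet}(\tau)\ge 2m+O(1)$ costs nothing: for the double of any $n$-gon, counting face--tetrahedron incidences in a triangulation of the ball with $T$ tetrahedra gives $4T=F(\tau)+2F_{\mathrm{int}}$, and connectedness of the dual graph gives $F_{\mathrm{int}}\ge T-1$, so $T\ge\tfrac12 F(\tau)-1=n-3$; with $n=2m+O(1)$ this reads $T\ge 2m+O(1)$. The matching upper bound is where the construction does its work: I would present $\tau$ directly as the boundary of a chain of tetrahedra glued face to face, two tetrahedra per gadget, and then check that this boundary sphere carries a separating Hamiltonian cycle whose two sides meet only along that cycle --- so that $\tau$ really is the double of a convex polygon triangulation, and reading off the two sides yields $\mathcal{T}^+$ and $\mathcal{T}^-$. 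The gluings, and the existence and separating property of the Hamiltonian cycle, are routine to verify but need care.

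For the flip distance the upper bound $\text{flip}(\mathcal{T}^+,\mathcal{T}^-)\le 3m+O(1)$ is easy: write down an explicit flip path that processes the gadgets one at a time, spending three flips on each plus a bounded number of flips to reconcile the data shared by consecutive gadgets. The lower bound $\text{flip}(\mathcal{T}^+,\mathcal{T}^-)\ge 3m-o(m)$ is the crux. The naive estimate $\tfrac12|\mathcal{T}^+\triangle\mathcal{T}^-|$ only gives $\approx 2m$, a factor $\tfrac23$ too small, so I would build a refined potential: a weighted count of the surviving diagonals of $\mathcal{T}^+$, the missing diagonals of $\mathcal{T}^-$, and certain ``obstruction'' diagonals that any flip path is forced to create and later destroy inside a gadget, with the weights tuned so that one flip changes the potential by at most $\tfrac23$ while the potential must change by $\asymp m$ overall. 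An alternative route is to argue structurally that a shortest flip path may be assumed to respect the diagonals that $\mathcal{T}^+$ and $\mathcal{T}^-$ already share at the gadget interfaces, reducing the bound to the finite assertion that one gadget needs three flips.

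Combining the four estimates gives $\text{flip}/\text{tet}=(3m+O(1))/(2m+O(1))\to\tfrac32$. The main obstacle is without question the flip-distance lower bound: excluding the possibility that some ingenious flip path exploits interactions across gadgets to beat three flips per gadget is precisely the kind of global control over flip paths that makes such questions hard, and is the same difficulty that surrounds the Sleator--Tarjan--Thurston diameter problem. The secondary difficulty is making sure the economical non-layered triangulation of $\tau$ actually exists with the claimed number of tetrahedra and a tree-shaped dual graph, since that is what pins $\text{tet}(\tau)$ to its minimum value.
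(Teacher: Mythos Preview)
Your proposal correctly identifies the overall shape of the argument---a family indexed by $m$ with $\text{tet}\approx 2m$ and $\text{flip}\approx 3m$---and your lower bound for $\text{tet}$ via the face-incidence count $4T=F(\tau)+2F_{\mathrm{int}}$ together with $F_{\mathrm{int}}\ge T-1$ is fine. But what you have written is a plan, not a proof, and the step you yourself flag as the crux is genuinely missing: you never define the gadget, and you never establish the flip-distance lower bound. The ``refined potential'' is described only as a desideratum (weights ``tuned so that one flip changes the potential by at most $\tfrac23$''), with no construction and no argument that such weights exist. Your alternative structural route assumes that $\mathcal{T}^+$ and $\mathcal{T}^-$ share diagonals at gadget interfaces so that a shortest path can be localised gadget by gadget; but in the constructions that actually achieve the ratio (including the paper's), $\mathcal{T}^+$ and $\mathcal{T}^-$ share no diagonals at all, so this reduction is unavailable and cross-gadget interactions are precisely what must be controlled.

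The paper's execution is quite different. It uses a single explicit pair $(\mathcal{T}^+_n,\mathcal{T}^-_n)$ on a $(2n+4)$-gon---two interleaved fans, not a cyclic arrangement of repeated gadgets---and proves the exact values $\text{flip}=3n+1$ and $\text{tet}=2n+3$. The flip lower bound is obtained not via a potential but by invoking a structural result from \cite{PW} (Lemma~\ref{lem2.3}): if three diagonals forming a triangle all appear in a shortest flip path, then that triangle occurs in some triangulation along the path. A counting argument first forces one of two specific long diagonals ($AB$ or $CD$) into any shortest path; the lemma then forces the entire triangle $\triangle ABD$ to appear, which splits the problem into two subproblems whose easy lower bounds sum to $3n+1$. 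For the $\text{tet}$ upper bound, rather than exhibiting a non-layered filling with tree-shaped dual graph as you anticipate, the paper recuts the same sphere $\tau_n$ along a \emph{different} Hamiltonian cycle to obtain a new pair $(\widetilde{\mathcal{T}}^+_n,\widetilde{\mathcal{T}}^-_n)$ admitting a flip path of length $2n+3$; so the economical filling is in fact layered, just with respect to another equator. This sidesteps the gluing-and-Hamiltonicity verification entirely.
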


Thus, there can be a relatively large gap between $\text{tet}(\tau)$ and $\text{flip}(\mathcal{T}^+,\mathcal{T}^-)$. In the end of this article, we propose a conjecture on when these two numbers might be equal.



We will introduce background and notations in Section 2, and prove Theorem \ref{thm} in Section 3.

\section*{Acknowledgements}
The author is very thankful to Peter Doyle and Lionel Pournin for their very helpful discussions and comments.

\section{Background and Notations}

Let $\mathcal{T}^+, \mathcal{T}^-$ be triangulations of a convex polygon $\Sigma$. We are interested in studying the flip distance between $\mathcal{T}^+$ and $\mathcal{T}^-$. By the following Lemma proved in \cite{STT}, we can focus on the case when $\mathcal{T}^+$ and $\mathcal{T}^-$ do not contain a common diagonal:
 
 \begin{lemma}\label{lem2.1}
 If $\mathcal{T}^+$ and $\mathcal{T}^-$ have a diagonal in common, this diagonal is never flipped in any shortest flip path from $\mathcal{T}^+$ to $\mathcal{T}^-$.
 \end{lemma}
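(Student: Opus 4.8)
My plan rests on the fact that the common diagonal $d$ cuts $\Sigma$ into two subpolygons $\Sigma_1,\Sigma_2$, and that flipping $d$ ought to be wasteful because it does no useful work on either side. Write $\mathcal{T}|_{\Sigma_i}$ for the triangulation of $\Sigma_i$ induced by a triangulation $\mathcal{T}$ of $\Sigma$ with $d\in\mathcal{T}$. One inequality is immediate: concatenating a shortest flip path of $\Sigma_1$ from $\mathcal{T}^+|_{\Sigma_1}$ to $\mathcal{T}^-|_{\Sigma_1}$ with one of $\Sigma_2$ gives a flip path of $\Sigma$ from $\mathcal{T}^+$ to $\mathcal{T}^-$ that never flips $d$, of length $N:=\text{flip}(\mathcal{T}^+|_{\Sigma_1},\mathcal{T}^-|_{\Sigma_1})+\text{flip}(\mathcal{T}^+|_{\Sigma_2},\mathcal{T}^-|_{\Sigma_2})$, where each term denotes the flip distance inside the corresponding subpolygon. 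It therefore suffices to prove that every flip path of $\Sigma$ from $\mathcal{T}^+$ to $\mathcal{T}^-$ flipping $d$ at least once has length strictly larger than $N$; in particular no shortest path then flips $d$.

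The tool I would use is a projection of arbitrary triangulations of $\Sigma$ onto the two sides. For any triangulation $\mathcal{T}$ of $\Sigma$, whether or not $d\in\mathcal{T}$, define a triangulation $\rho_i(\mathcal{T})$ of $\Sigma_i$ as follows: keep the triangles of $\mathcal{T}$ that lie inside $\Sigma_i$; these cover $\Sigma_i$ except for a region $R_i$ adjacent to the chord $d$ (the union, intersected with $\Sigma_i$, of the triangles of $\mathcal{T}$ that straddle $d$), which has $d$ on its boundary, and which I triangulate by the fan from a fixed endpoint $u$ of $d$. When $d\in\mathcal{T}$ there are no straddling triangles, so $R_i=\varnothing$ and $\rho_i(\mathcal{T})=\mathcal{T}|_{\Sigma_i}$; in particular $\rho_i(\mathcal{T}^{\pm})=\mathcal{T}^{\pm}|_{\Sigma_i}$. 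The key local claim is: a single flip $\mathcal{T}\to\mathcal{T}'$ changes $\rho_1$ and $\rho_2$ by at most one flip \emph{in total} --- either both are unchanged, or exactly one of them changes and only by a single flip --- and if the flip is the flip of $d$, or the flip creating $d$, then \emph{neither} $\rho_i$ changes.

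Granting the local claim, the counting is short. Along a flip path $P=(\mathcal{T}_0,\dots,\mathcal{T}_n)$ with $\mathcal{T}_0=\mathcal{T}^+$ and $\mathcal{T}_n=\mathcal{T}^-$, let $c$ be the number of steps that flip $d$. The sequences $\bigl(\rho_1(\mathcal{T}_t)\bigr)_t$ and $\bigl(\rho_2(\mathcal{T}_t)\bigr)_t$, with repetitions deleted, are flip paths in $\Sigma_1$ and $\Sigma_2$ from $\mathcal{T}^+|_{\Sigma_i}$ to $\mathcal{T}^-|_{\Sigma_i}$, and by the local claim the total number of flips they contain is at most $n-c$. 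Hence $N$ is at most their combined length, which is at most $n-c$; so if $c\ge 1$ then $\text{flip}(\mathcal{T}^+,\mathcal{T}^-)\le N\le n-c<n$ and $P$ is not shortest. This proves the lemma, and incidentally yields $\text{flip}(\mathcal{T}^+,\mathcal{T}^-)=N$.

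The real work is the local claim, which I expect to reduce to a case analysis on how the flipped quadrilateral $Q$ sits relative to the chord $d$: $Q$ contained in one $\Sigma_i$; $Q$ with both of its diagonals crossing $d$ (then the set of triangles of $\mathcal{T}$ fully inside $\Sigma_i$ does not change, so $R_1$, $R_2$ and both $\rho_i$ are untouched); and the mixed case where exactly one diagonal of $Q$ crosses $d$, in which a single triangle migrates between ``inside $\Sigma_i$'' and ``straddling $d$'', so $R_i$ changes by one triangle on one side only --- here one must check that adjoining or deleting that triangle alters the fan triangulation of $R_i$ by exactly one flip. A convenient geometric fact keeping the mixed case under control is that an edge of $\mathcal{T}$ lying inside $\overline{\Sigma_i}$ has at most one straddling incident triangle, since its two incident triangles lie on opposite sides of the edge while the chord $d$ lies on only one side; thus at most one such migration happens per flip. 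Pinning down the fan behaviour, and checking that the fixed apex $u$ never breaks the ``at most one flip'' bound, is where the argument needs care; the remainder is bookkeeping.
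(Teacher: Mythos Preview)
The paper does not prove this lemma itself; it simply cites \cite{STT} (Sleator--Tarjan--Thurston). Your projection argument is correct and is essentially the argument given there: they too project an arbitrary flip path onto each subpolygon by keeping the edges lying in $\Sigma_i$ and completing canonically to a triangulation, then observe that each flip advances the two projections by at most one step in total, with the flip of $d$ (and its reverse) wasted.

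The detail you flag as needing care---that the fan from $u$ behaves well in the mixed case---does go through. Since the diagonals of $\mathcal{T}$ crossing $d$ are pairwise noncrossing, their endpoints on the $\Sigma_i$-side occur monotonically along the boundary arc from $u$ to $v$; hence $R_i$ is actually a convex polygon with $u$ as a vertex, so the fan from $u$ is well defined. In the mixed case the migrating triangle inserts or deletes a single vertex of $R_i$ between two consecutive ones, and the fan from $u$ then changes by exactly one flip (or by none, when the new vertex is adjacent to $u$). With this verified, your outline is a complete proof and matches the cited one.
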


Denote by $XY$ the diagonal of $\Sigma$ joining vertices $X,Y$, and by $XYZW$ the quadrilateral with vertices $X,Y,Z,W$. If $XYZW$ is a convex quadrilateral with diagonals $XY$ and $ZW$, we can associate the flip from $XY$ to $ZW$ to a \textit{flipping tetrahedron}, obtained by gluing two copies of the quadrilateral $XYZW$ along the boundary, then drawing $XY$ on the front and $ZW$ on the back quadrilateral. 

Let $\tau$ be the triangulation of the sphere obtained by gluing $\mathcal{T}^+$ and $\mathcal{T}^-$ along the boundary of $\Sigma$.  Given a flip path from $\mathcal{T}^+$ to $\mathcal{T}^-$, we construct the flipping tetrahedron for every flip in the path, place the tetrahedra according to the locations of their vertices in $\Sigma$ and stack them from the top to the bottom in the order given by the flip path. The tetrahedra are then glued according to the way they are stacked. As $\mathcal{T}^+$ and $\mathcal{T}^-$ do not have a diagonal in common, the resulting space after gluing is homeomorphic to a ball, and the boundary consists exactly of triangles in $\tau$. This gives a tetrahedral decomposition extending $\tau$, thus $\text{tet}(\tau)\leq\text{flip}(\mathcal{T}^+,\mathcal{T}^-)$. 
 
 We say that a diagonal of $\Sigma$ is \textit{in a flip path} from $\mathcal{T}^+$ to $\mathcal{T}^-$ if it belongs to a triangulation in this flip path. 
 If a diagonal is in the flip path but not in $\mathcal{T}^+$ or $\mathcal{T}^-$, we call this diagonal \textit{extra}. The following lemma relates the length of a flip path to the number of extra diagonals in the path.
 
\begin{lemma}\label{lem2.2}
Let $e$ be the number of extra diagonals in a flip path, and $n$ be the number of vertices of the polygon. Then the number of flips is $n-3+e$.
\end{lemma}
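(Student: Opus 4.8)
The plan is to match flips with the diagonals created along the path. Recall two facts: a triangulation of a convex $n$-gon has exactly $n-3$ diagonals, and a single flip replaces one diagonal by another while fixing all the rest. Write the path as $\mathcal{T}^+=\mathcal{T}_0,\mathcal{T}_1,\dots,\mathcal{T}_k=\mathcal{T}^-$, so that $k$ is the number of flips, and say that the flip from $\mathcal{T}_{i-1}$ to $\mathcal{T}_i$ \emph{creates} the diagonal it inserts. Then $k$ equals the total number of creations.

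First I would observe that, under the standing assumption of this section that $\mathcal{T}^+$ and $\mathcal{T}^-$ have no common diagonal, the diagonals occurring anywhere in the path fall into three disjoint classes: the $n-3$ diagonals of $\mathcal{T}^+$, the $n-3$ diagonals of $\mathcal{T}^-$, and the $e$ extra ones. I then want to show that each diagonal of $\mathcal{T}^-$ and each extra diagonal is created exactly once, while no diagonal of $\mathcal{T}^+$ is ever created; this sets up a bijection between the $k$ flips and the $(n-3)+e$ diagonals occurring in the path that do not lie in $\mathcal{T}^+$, hence $k=n-3+e$. The content of this step is that a diagonal, once deleted, is not reinserted later — equivalently, the triangulations containing a fixed diagonal form a single contiguous stretch of the path, anchored at $\mathcal{T}_0$ for the diagonals of $\mathcal{T}^+$.

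This ``no reinsertion'' property is the only real obstacle: a path that undoes and then redoes a flip has strictly more than $n-3+e$ flips for the same value of $e$, so a hypothesis on the path is needed, and the natural one — that the path realizes the flip distance, or at least is not wasteful in this way — must be invoked at this point; I expect it to be the delicate part. As a cross-check, the count can also be read off from the tetrahedral decomposition $\mathscr{T}$ built earlier in this section: it has $n$ vertices (the flipping tetrahedra introduce none), $2n-4$ boundary triangles (the $n-2$ triangles of each of $\mathcal{T}^+$ and $\mathcal{T}^-$), $3n-6$ boundary edges and $e$ interior edges (one per extra diagonal, again using that each extra diagonal occupies a single stretch of the path), and $k$ tetrahedra; feeding these into $V-E+F-T=1$ together with $4T=2F-F_{\partial}$ yields $2k=2n-6+2e$ after a brief computation.

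I would present the bijective argument as the main proof and keep the Euler-characteristic computation in reserve, noting that both ultimately rest on the same structural fact that no diagonal leaves and then returns.
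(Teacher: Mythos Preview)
Your bijection between flips and the diagonals they remove (equivalently, create) is exactly the paper's argument; the paper's entire proof is the single line ``number of flips $=$ number of diagonals in $\mathcal{T}^+$ $+$ number of extra diagonals $= n-3+e$.'' You are right that this tacitly assumes no diagonal is removed and later reinserted, and that the equality is false for an arbitrary flip path; the paper simply does not address the point. In its two applications, however, the lemma is used either as the inequality $k\ge n-3+e$ (which holds for any path, since each of the $(n-3)+e$ diagonals in the path but not in $\mathcal{T}^+$ must be created at least once) or for an explicitly described path that visibly never revisits a diagonal, so the looseness does no harm downstream. Your Euler-characteristic cross-check is pleasant but, as you observe, rests on the same no-reinsertion hypothesis and adds nothing the paper needs.
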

\begin{proof}
$\text{number of flips}=\text{number of diagonals in }\mathcal{T}^++\text{number of extra diagonals}=n-3+e$.
\end{proof}

Finally, to prove Theorem \ref{thm}, we need the following lemma proved in \cite{PW}:
 


\begin{lemma}\label{lem2.3}
Suppose that three diagonals $d_1,d_2,d_3$ are in a shortest flip path from $\mathcal{T}^+$ to $\mathcal{T}^-$ and they form a triangle. Then this triangle belongs to a triangulation in this flip path.
\end{lemma}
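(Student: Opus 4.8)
The plan is to reduce the statement to a claim about two diagonals sharing a vertex, and then to attack that claim by an exchange argument on the flip path.

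\emph{Reduction.} Write the shortest flip path as $\mathcal{T}^{+}=\mathcal{T}_0,\dots,\mathcal{T}_N=\mathcal{T}^{-}$ and, for $i\in\{1,2,3\}$, set $S_i=\{k:\ d_i\in\mathcal{T}_k\}$, which is nonempty by hypothesis. First I would observe that each $S_i$ is an interval: if $d_i\in\mathcal{T}_p$ and $d_i\in\mathcal{T}_q$ with $p<q$, then $\mathcal{T}_p,\dots,\mathcal{T}_q$ is a shortest flip path between its endpoints (a shorter path from $\mathcal{T}_p$ to $\mathcal{T}_q$ could be spliced into the given one), its endpoints share $d_i$, so by Lemma~\ref{lem2.1} the diagonal $d_i$ is never flipped along it, whence $d_i\in\mathcal{T}_k$ for all $k\in[p,q]$. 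Since $d_1,d_2,d_3$ bound a triangle with no polygon vertex in its interior (the vertices of the convex polygon being in convex position), a triangulation in the path carries this triangle as a face exactly when it contains all of $d_1,d_2,d_3$; so, assuming the conclusion fails, $S_1\cap S_2\cap S_3=\varnothing$. Three intervals of a line with empty common intersection must contain two that are disjoint, so after relabelling $S_1\cap S_2=\varnothing$; reversing the path if necessary (the reverse of a shortest flip path is one), every index of $S_1$ precedes every index of $S_2$. As $d_1,d_2,d_3$ are the sides of a triangle, $d_1$ and $d_2$ meet in a vertex; write $d_1=XY$, $d_2=YZ$ with common vertex $Y$.

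\emph{The key claim.} It now suffices to prove: $(\star)$ \emph{if $XY$ and $YZ$ are diagonals sharing the vertex $Y$, and $\mathcal{T},\mathcal{T}'$ are triangulations with $XY\in\mathcal{T}$ and $YZ\in\mathcal{T}'$, then every shortest flip path from $\mathcal{T}$ to $\mathcal{T}'$ contains a triangulation carrying both $XY$ and $YZ$.} Granting $(\star)$: put $a=\max S_1$, $b=\min S_2$; then $XY\in\mathcal{T}_a$, $YZ\in\mathcal{T}_b$, $a<b$ (in fact $a+1<b$, since one flip cannot replace a diagonal by another sharing a vertex with it), and $\mathcal{T}_a,\dots,\mathcal{T}_b$ is a shortest flip path, so $(\star)$ gives $k\in[a,b]$ with $XY,YZ\in\mathcal{T}_k$, i.e.\ $k\in S_1\cap S_2$ --- a contradiction. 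To prove $(\star)$ I would induct on $\text{flip}(\mathcal{T},\mathcal{T}')$, using the ears cut off by the two diagonals. Let $P$ be the polygon into which $XY$ cuts $\Sigma$ that does not contain $Z$, and $P'$ the polygon into which $YZ$ cuts $\Sigma$ that does not contain $X$; then $P$ and $P'$ are disjoint. For a given shortest path from $\mathcal{T}$ to $\mathcal{T}'$: if its first flip does not destroy $XY$, then $XY\in\mathcal{T}_1$ and we apply the inductive hypothesis to the shorter shortest path $\mathcal{T}_1,\dots,\mathcal{T}'$; otherwise I would try to rearrange the path --- invoking Lemma~\ref{lem2.1} and the commutation of flips supported on disjoint quadrilaterals --- so as to defer the destruction of $XY$ to a later step without lengthening the path. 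Choosing a shortest path that destroys $XY$ as late as possible, at some step $j$ (if it never destroys $XY$ then $XY\in\mathcal{T}'$ and the last triangulation already has both), the flip at step $j$ cannot itself create $YZ$ (they share $Y$), so $YZ$ lies in $\mathcal{T}_{j-1}$ iff it lies in $\mathcal{T}_j$; and one wants to conclude that $\mathcal{T}_{j-1}$, which still carries $XY$, already carries $YZ$ --- for if not, one should be able to defer the destruction of $XY$ one step further.

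\emph{The obstacle.} The real work, and what I expect to be the main difficulty, is $(\star)$, and inside it the deferral argument just sketched. The trouble is concrete: when $XY$ is flipped out it is replaced by a diagonal crossing the triangle $XYZ$ (with an endpoint strictly between $X$ and $Y$), and such triangle-crossing diagonals can survive many flips --- including a stretch during which \emph{neither} $XY$ nor $YZ$ is present --- before $YZ$ appears, so one must genuinely exclude a shortest path that passes from ``$XY$ present'' to ``$YZ$ present'' through triangulations carrying such a diagonal throughout. The delicate case is when the flip performed right after $XY$ is destroyed uses the very diagonal just created, so that it does not commute with the destruction of $XY$; this needs a more careful local surgery on that pair of flips, and I would organise the accounting by sorting the flips of the given shortest path according to whether each is supported inside $P$, inside $P'$, or in the central region bounded by $XY$, $YZ$ and the segment $ZX$. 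A possible alternative to the combinatorial argument would be topological: pass to the triangulated ball determined by the subpath $\mathcal{T}_a,\dots,\mathcal{T}_b$ and extract a horizontal slice that carries both diagonals. Apart from $(\star)$, every step above --- the interval structure of the $S_i$, the Helly step, and the reduction --- is a routine consequence of Lemma~\ref{lem2.1}.
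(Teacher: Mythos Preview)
The paper does not actually prove Lemma~\ref{lem2.3}: it is quoted from \cite{PW} (Pournin--Wang, \emph{Strong convexity in flip-graphs}), so there is no in-paper argument to compare your proposal against. That said, let me assess the proposal on its merits.

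Your reduction is correct and pleasant: the interval structure of the sets $S_i$ follows from Lemma~\ref{lem2.1} exactly as you say, the one-dimensional Helly step is valid, and the passage from ``all three coexist'' to ``the triangle is a face'' is immediate by convex position. Reducing to two sides $XY,YZ$ of the triangle sharing a vertex is a legitimate move.

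The genuine gap is $(\star)$, and you flag it yourself. Two remarks. First, $(\star)$ is \emph{stronger} than Lemma~\ref{lem2.3}: it is a two-diagonal statement that makes no use of the third side $XZ$ being present in the path, so you have set yourself a harder target than necessary. Second, and more importantly, the ``defer the destruction of $XY$'' argument does not close as sketched. The problematic case you isolate --- the flip immediately after $XY$ is removed uses the newly created diagonal --- is not a boundary technicality but can propagate: a geodesic may cross from the $XY$-side to the $YZ$-side through a long stretch of triangulations each carrying some diagonal that crosses the triangle $XYZ$, with consecutive flips repeatedly sharing support, so no single commutation brings $XY$ back. Sorting flips by whether they are supported in $P$, in $P'$, or in the central region does not by itself help, because flips in the central region are exactly the ones that fail to commute with the removal of $XY$ and with each other. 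What one needs here is precisely the strong-convexity machinery of \cite{PW} (geodesics between triangulations lying in a star stay in that star, and its refinements), and that is a theorem of some length, not a local surgery. Your topological alternative --- slicing the ball built from the subpath --- is suggestive but, as stated, does not produce a combinatorial triangulation in the path either.

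In short: the scaffolding is sound, but the load-bearing step $(\star)$ is left open, and filling it is essentially equivalent to the cited result rather than a routine exercise.
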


\section{Proof of Theorem 1.1}

Let $\Sigma_n$ be the convex polygon with $2n+4$ vertices ($n\geq 2$) labelled in Figure \ref{fig1} (left). Let $\mathcal{T}^+_n$ and $\mathcal{T}^-_n$ be the triangulations of $\Sigma_n$ drawn in Figure \ref{fig1} (right). Note that some vertices are omitted in the figure.

\begin{figure}[h]
\centering
\includegraphics[width=0.71\textwidth]{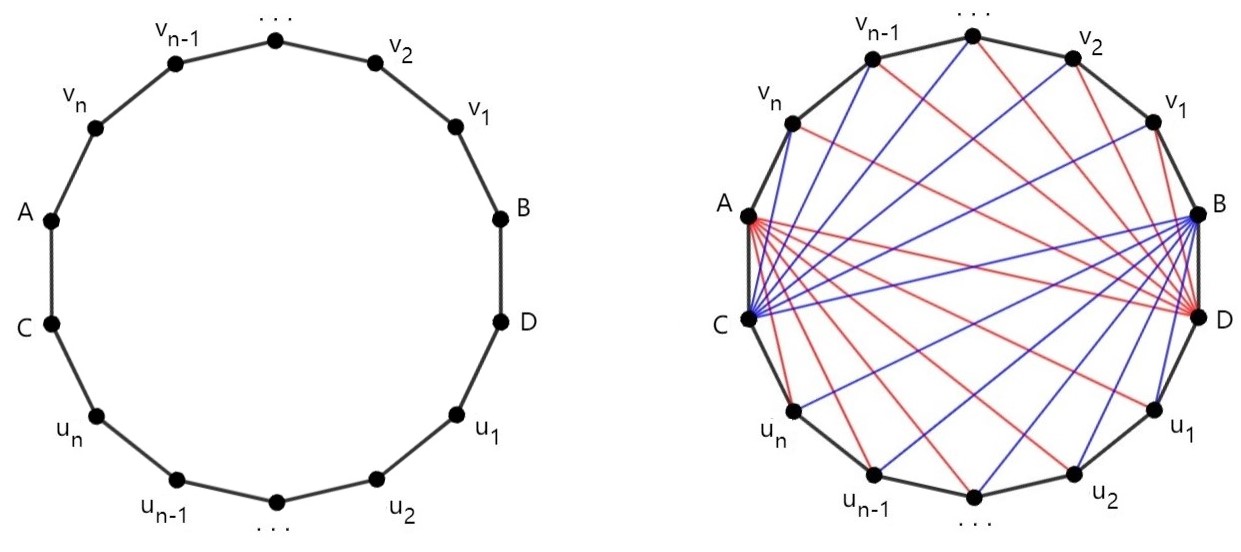}
\caption{$\Sigma_n$ (left) and its triangulations (right): $\mathcal{T}^+_n$ (blue) and $\mathcal{T}^-_n$ (red)}
\label{fig1}
\end{figure}

We first find an upper bound for $\text{flip}(\mathcal{T}^+_n,\mathcal{T}^-_n)$.

\begin{lemma}\label{lem3.1}
$\text{flip}(\mathcal{T}^+_n,\mathcal{T}^-_n)\leq 3n+1$
\end{lemma}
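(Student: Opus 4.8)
The plan is to construct an explicit flip path from $\mathcal{T}^+_n$ to $\mathcal{T}^-_n$ and to bound its length using Lemma \ref{lem2.2}. Since $\Sigma_n$ has $2n+4$ vertices, Lemma \ref{lem2.2} says that any flip path using $e$ extra diagonals has length $(2n+4)-3+e = 2n+1+e$. Hence Lemma \ref{lem3.1} is equivalent to exhibiting a flip path from $\mathcal{T}^+_n$ to $\mathcal{T}^-_n$ in which at most $n$ diagonals are extra. My strategy is to process the polygon ``unit by unit'', exploiting the repetitive (self-similar) structure of $(\Sigma_n,\mathcal{T}^+_n,\mathcal{T}^-_n)$ visible in Figure \ref{fig1}: the configuration for parameter $n$ is assembled from $n$ congruent blocks together with a bounded-size remainder near the ends, and $\mathcal{T}^+_n$, $\mathcal{T}^-_n$ agree with the lower-parameter configurations outside the block currently being processed.

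I would carry this out by induction on $n$, checking the base case $n=2$ by hand. For the inductive step I would write down a short, fixed-length sequence of flips that converts the outermost block of $\mathcal{T}^+_n$ into the corresponding piece of $\mathcal{T}^-_n$: one flips the diagonals of that block in a prescribed order, temporarily introducing exactly one new (extra) diagonal, until the block has been rebuilt and the rest of the triangulation has been left as the configuration $\mathcal{T}^+_{n-1}$ on a sub-polygon $\Sigma_{n-1}$ — possibly together with one fixed diagonal cutting $\Sigma_{n-1}$ off from the already-processed region, which by Lemma \ref{lem2.1} is never flipped and can therefore be ignored. Each block thus contributes precisely one extra diagonal; over the $n$ blocks this gives $e=n$, and the total length is $2n+1+n = 3n+1$, as claimed. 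Equivalently, one can describe the whole path at once by performing the $n$ block-conversions in order from one end of $\Sigma_n$ to the other.

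The step I expect to be the main obstacle is the bookkeeping inside the inductive step: I must verify that every flip in the local sequence is legal — the diagonal being flipped must be a diagonal of a convex quadrilateral in the \emph{current} triangulation — and, more delicately, that the sequence creates no extra diagonal beyond the single one charged to the block, since any stray extra diagonal would push the count above $3n+1$. A related subtlety is the seam between consecutive blocks: I must confirm that after processing one block the induced triangulation on the remaining polygon is \emph{exactly} the parameter-$(n-1)$ configuration, so that the induction hypothesis applies verbatim and no flips are wasted reconciling adjacent blocks. Once the block picture is read off precisely from Figure \ref{fig1}, both of these checks reduce to a finite, routine verification.
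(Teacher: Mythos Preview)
Your reduction via Lemma \ref{lem2.2} to the target $e\le n$ is sound, and indeed the paper's path attains exactly $e=n$. The difficulty is that your picture of the configuration is off: $(\Sigma_n,\mathcal{T}^+_n,\mathcal{T}^-_n)$ is \emph{not} assembled from $n$ congruent blocks. In the cyclic order the vertices read $A,v_n,\dots,v_1,B,D,u_1,\dots,u_n,C$; the triangulation $\mathcal{T}^+_n$ consists of the long diagonal $BC$ together with the two fans $Cv_1,\dots,Cv_n$ and $Bu_1,\dots,Bu_n$, while $\mathcal{T}^-_n$ consists of $AD$ together with the fans $Dv_1,\dots,Dv_n$ and $Au_1,\dots,Au_n$. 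Every diagonal of $\mathcal{T}^-_n$ crosses many diagonals of $\mathcal{T}^+_n$ (for instance $Au_n$ already crosses $BC$ and all of $Cv_1,\dots,Cv_n$), so there is no short local sequence of flips that produces a diagonal of $\mathcal{T}^-_n$ cutting off a processed region and leaving a genuine copy of $(\Sigma_{n-1},\mathcal{T}^+_{n-1},\mathcal{T}^-_{n-1})$ on the other side. The inductive ``peel off one block'' step therefore cannot be carried out as you describe; the seam issue you flagged is not a bookkeeping nuisance but a structural obstruction.

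The paper instead exploits the fan structure with a single explicit three–phase path. Phase 1 flips $Cv_n,Cv_{n-1},\dots,Cv_1$ in order, replacing them by $Av_{n-1},\dots,Av_1,AB$; phase 2 flips $BC,Bu_n,\dots,Bu_1$, replacing them by $Au_n,\dots,Au_1,AD$ (all already in $\mathcal{T}^-_n$); phase 3 flips $AB,Av_1,\dots,Av_{n-1}$, producing $Dv_1,\dots,Dv_n$. This is $n+(n+1)+n=3n+1$ flips, and the only extra diagonals ever created are $AB,Av_1,\dots,Av_{n-1}$, so $e=n$ on the nose. If you want an inductive formulation, the natural one is not block-by-block but rather ``grow the $A$-fan one step at a time'' within each phase; the global description above is simpler.
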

\begin{proof}
We can transform $\mathcal{T}^+_n$ to the triangulation in the left of Figure \ref{fig2} by performing a sequence of $n$ flips in the order of diagonals $Cv_n, Cv_{n-1},\dots,Cv_1$. From this triangulation, we perform another $n+1$ flips, in the order of diagonals $BC$ followed by $Bu_n,Bu_{n-1},\dots,Bu_1$, and obtain the triangulation in the right of Figure \ref{fig2}. Finally, an additional $n$ flips take this triangulation to $\mathcal{T}^-_n$.
\end{proof}

\begin{figure}[h]
\centering
\includegraphics[width=0.71\textwidth]{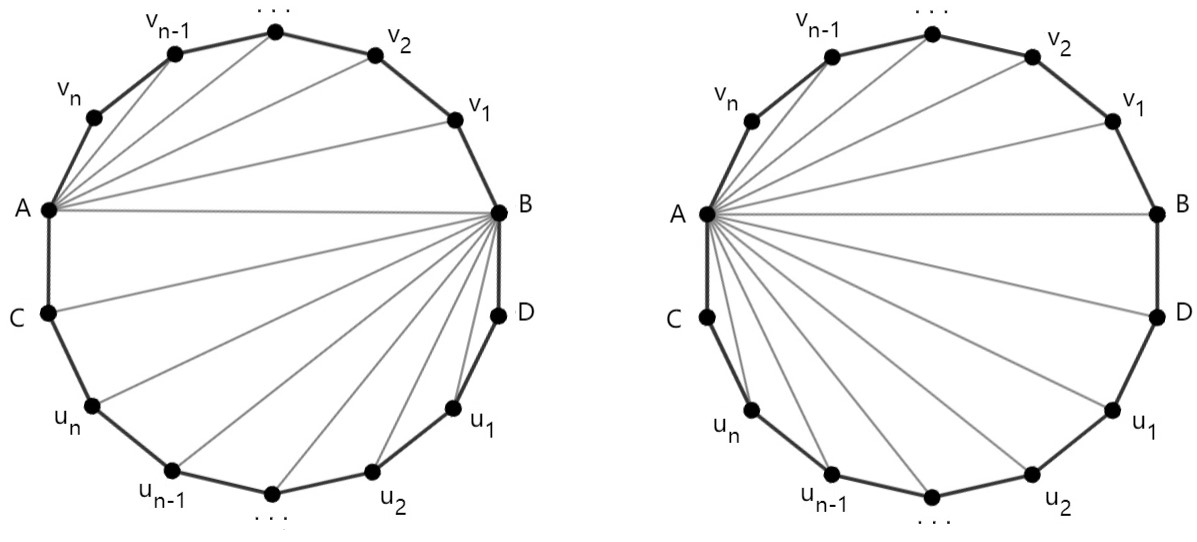}
\caption{Triangulations obtained from  $\mathcal{T}^+_n$ after $n$ flips (left) and then $n+1$ more flips (right).}
\label{fig2}
\end{figure}

From now on, we fix a shortest flip path from  $\mathcal{T}^+_n$ to $\mathcal{T}^-_n$ and denote this path by $\mathcal{P}$. The goal is to show that $\mathcal{P}$ contains exactly $3n+1$ flips. We first count the number of diagonals in $\mathcal{T}^+_n$ that are directly flipped to a diagonal in $\mathcal{T}^-_n$ in $\mathcal{P}$.

\begin{lemma}\label{lem3.2}
In the shortest flip path $\mathcal{P}$, at least $n+1$ diagonals in $\mathcal{T}^+_n$ are flipped directly to some diagonal in $\mathcal{T}^-_n$.
\end{lemma}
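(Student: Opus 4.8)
The plan is to deduce Lemma~\ref{lem3.2} purely by counting creation/destruction events of diagonals along $\mathcal{P}$, using Lemma~\ref{lem2.2} together with the upper bound of Lemma~\ref{lem3.1}. The only input from the specific example is that $\mathcal{T}^+_n$ and $\mathcal{T}^-_n$ share no common diagonal, which one reads off from Figure~\ref{fig1}; then, since $\Sigma_n$ has $2n+4$ vertices, each of $\mathcal{T}^+_n,\mathcal{T}^-_n$ has exactly $2n+1$ diagonals. By Lemma~\ref{lem2.2}, if $e$ is the number of extra diagonals in $\mathcal{P}$ then $\mathcal{P}$ has exactly $(2n+4)-3+e=2n+1+e$ flips, and Lemma~\ref{lem3.1} gives $2n+1+e\le 3n+1$, hence $e\le n$.

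Next I would run the following accounting. Each flip destroys exactly one diagonal, so $\mathcal{P}$ has exactly $2n+1+e$ destruction events. Each of the $2n+1$ diagonals of $\mathcal{T}^+_n$ is destroyed at least once (it is absent from $\mathcal{T}^-_n$), each of the $e$ extra diagonals is destroyed at least once, and no diagonal of $\mathcal{T}^-_n$ need ever be destroyed. These lower bounds already total $2n+1+e$, so they are all equalities: in $\mathcal{P}$, each diagonal of $\mathcal{T}^+_n$ is destroyed exactly once, each extra diagonal is destroyed exactly once, and no diagonal of $\mathcal{T}^-_n$ is ever destroyed. The mirror-image argument with creation events shows that each diagonal of $\mathcal{T}^-_n$ is created exactly once.

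To finish: the $2n+1$ flips that create the diagonals of $\mathcal{T}^-_n$ are pairwise distinct, one for each such diagonal. The diagonal destroyed by one of these flips is not a diagonal of $\mathcal{T}^-_n$ (none of those is ever destroyed), so it is either a diagonal of $\mathcal{T}^+_n$ or an extra diagonal; since exactly $e$ flips of $\mathcal{P}$ destroy an extra diagonal, at least $2n+1-e\ge n+1$ of these $\mathcal{T}^-_n$-creating flips destroy a diagonal of $\mathcal{T}^+_n$, i.e.\ carry a diagonal of $\mathcal{T}^+_n$ directly to a diagonal of $\mathcal{T}^-_n$. As each diagonal of $\mathcal{T}^+_n$ is destroyed exactly once, these $n+1$ (or more) flips involve $n+1$ distinct diagonals of $\mathcal{T}^+_n$, which is the claim.

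I expect the delicate point to be the accounting step that forces no diagonal to be \emph{reused} (flipped out and later flipped back in) in $\mathcal{P}$: this is exactly where the equality in Lemma~\ref{lem2.2}, rather than a mere inequality, is needed. Everything else is routine; in particular Lemma~\ref{lem2.3} does not seem to be required for this lemma, and presumably enters later, when one upgrades the lower bound on $\text{flip}(\mathcal{T}^+_n,\mathcal{T}^-_n)$ to the matching value $3n+1$.
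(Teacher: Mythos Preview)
Your proof is correct and follows essentially the same counting idea as the paper: both combine the upper bound $|\mathcal{P}|\le 3n+1$ from Lemma~\ref{lem3.1} with the fact that each of the $2n+1$ diagonals of $\mathcal{T}^+_n$ must be removed, concluding that at most $n$ of them can pass through an intermediate (extra) diagonal. The paper states this more tersely as ``$x+2(2n+1-x)\le 3n+1$'', whereas you route the same inequality through the bound $e\le n$ obtained from Lemma~\ref{lem2.2} and a more explicit creation/destruction bookkeeping; the arguments are equivalent, with yours being a slightly more detailed unpacking of why the ``at least two flips'' charged to each indirect diagonal are genuinely distinct.
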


\begin{proof}
Let $x$ be the number of diagonals in $\mathcal{T}^+_n$ that are flipped directly to a diagonal in $\mathcal{T}^-_n$ in $\mathcal{P}$. 
Then $x$ flips are required in total to transform all these diagonals to diagonals in $\mathcal{T}^-_n$. For each of the remaining diagonals, at least $2$ flips are required to transform it to a diagonal in $\mathcal{T}^-_n$. There are $2n+1-x$ such remaining diagonals. Thus, the total number of flips in this path is no less than $x+2(2n+1-x)=4n+2-x$. From Lemma \ref{lem3.1}, $4n+2-x\leq 3n+1$, so $x\geq n+1$.
\end{proof}

\begin{lemma}\label{lem3.3}
$AB$ or $CD$ is in $\mathcal{P}$. 
\end{lemma}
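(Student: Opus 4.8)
\emph{Proof proposal.}

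The plan is to argue by contradiction. Suppose that neither $AB$ nor $CD$ appears in $\mathcal{P}$; I will show that $\mathcal{P}$ then contains more than $n$ extra diagonals, which is impossible: by Lemma \ref{lem3.1} the path $\mathcal{P}$ has at most $3n+1$ flips, so by Lemma \ref{lem2.2} its number of extra diagonals $e$ satisfies $(2n+4)-3+e\le 3n+1$, i.e.\ $e\le n$. Throughout I use that $\mathcal{T}^+_n$ consists of $BC$ together with the diagonals $Bu_1,\dots,Bu_n,Cv_1,\dots,Cv_n$, and that $\mathcal{T}^-_n$ consists of $AD$ together with $Au_1,\dots,Au_n,Dv_1,\dots,Dv_n$ (this is visible from Figure \ref{fig1} and is already implicit in the proof of Lemma \ref{lem3.1}); in particular $AB$ and $CD$ are diagonals of $\Sigma_n$ belonging to neither triangulation.

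First I would analyze what a single ``direct flip'' forces. If $d^+\in\mathcal{T}^+_n$ is flipped directly to $d^-\in\mathcal{T}^-_n$ somewhere in $\mathcal{P}$, then $d^+$ and $d^-$ are the two diagonals of one quadrilateral $Q$, and the triangulation immediately before that flip contains the two triangles into which $d^+$ cuts $Q$; the edges of those triangles are then diagonals or edges of $\Sigma_n$, and the diagonals among them lie in $\mathcal{P}$. Going through the possibilities for $Q$ using the cyclic positions of the vertices of $\Sigma_n$: (a) if $d^+=BC$ then, whatever $d^-$ is ($AD$, an $Au_j$, or a $Dv_j$), one of the two triangles has $AB$ or $CD$ as an edge, so under our assumption $BC$ is never flipped directly; (b) if $d^+=Bu_i$ then the cases $d^-=AD$ and $d^-=Au_j$ each produce a triangle with edge $AB$, so $d^-$ must be some $Dv_{j_i}$, and then the two triangles have among their edges the diagonals $v_{j_i}u_i$ and, when $i\ge 2$, $Du_i$; (c) symmetrically, if $d^+=Cv_i$ then $d^-$ must be some $Au_{j_i}$, and the two triangles have among their edges $v_iu_{j_i}$ and, when $i\le n-1$, $Av_i$. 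The diagonals produced in (b) and (c) --- those of shape $v_\ast u_\ast$, $Du_\ast$, or $Av_\ast$ --- lie in neither $\mathcal{T}^+_n$ nor $\mathcal{T}^-_n$, hence are extra.

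The second step is a counting argument. By Lemma \ref{lem3.2}, at least $n+1$ diagonals of $\mathcal{T}^+_n$ are flipped directly, and by (a) none of them is $BC$; let $a$ be the number of them among the $Bu_i$ and $b$ the number among the $Cv_i$, so $a+b\ge n+1$ while $a\le n$ and $b\le n$. The $a$ diagonals $v_{j_i}u_i$ from (b) are pairwise distinct (distinct $u$-index), and likewise the $b$ diagonals $v_iu_{j_i}$ from (c) (distinct $v$-index); a diagonal lying in both families corresponds to a pair of reciprocal direct flips $Bu_q\!\to\!Dv_p$ and $Cv_p\!\to\!Au_q$, and the number of such coincidences is at most $\min(a,b)$. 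The diagonals $Du_i$ (at least $a-1$ of them, only $i=1$ being excluded) and $Av_i$ (at least $b-1$ of them) are pairwise distinct and distinct from every $v_\ast u_\ast$ diagonal, because of the marked endpoints $D$ and $A$. Therefore $e\ge\bigl(a+b-\min(a,b)\bigr)+(a-1)+(b-1)=\max(a,b)+(a+b)-2\ge\tfrac32(a+b)-2\ge\tfrac32(n+1)-2>n$ for every $n\ge 2$, contradicting $e\le n$.

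The bulk of the work, and the main obstacle, is the case analysis in the first step: for each shape of the flip quadrilateral $Q$ one must identify the two adjacent triangles and read off their edges, and in particular verify the two structural facts that $BC$ admits no direct flip and that $Bu_i$ (resp.\ $Cv_i$) can be flipped directly only to a $Dv_j$ (resp.\ $Au_j$). This is precisely where the fan structure of $\mathcal{T}^+_n$ and $\mathcal{T}^-_n$ enters; once these local computations are in place, the counting is mechanical. One should also note that a diagonal of $\mathcal{T}^+_n$ could in principle be flipped several times along $\mathcal{P}$, but this is harmless: for each of the $a+b$ diagonals it suffices to fix one direct flip and record the extra diagonals it exposes.
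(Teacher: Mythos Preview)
Your proposal is correct and follows essentially the same strategy as the paper: assume neither $AB$ nor $CD$ occurs, use Lemma~\ref{lem3.2} to obtain at least $n+1$ direct flips, observe that under the assumption each such flip must be of the form $Bu_i\to Dv_j$ or $Cv_i\to Au_j$, read off extra diagonals from the flip quadrilaterals, and reach a contradiction with the bound $e\le n$ coming from Lemmas~\ref{lem3.1} and~\ref{lem2.2}. The only difference is in the bookkeeping of which extra diagonals are counted: the paper records the two ``outer'' sides $Bv_j,Du_i$ (resp.\ $Cu_i,Av_j$) of each flip quadrilateral and obtains $e\ge 2n-1$, whereas you record the ``inner'' side $u_iv_j$ together with $Du_i$ (resp.\ $Av_k$) and obtain the weaker but still sufficient bound $e\ge\tfrac32(n+1)-2>n$.
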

\begin{proof}
 In the proof, a flip in $\mathcal{P}$ from $d$ to $d'$ will be denoted by $d\to d'$ if $d$ is a diagonal in $\mathcal{T}^+_n$ and $d'$ is in $\mathcal{T}^-_n$. Note that $d$ must cross $d'$. 

Depending on the vertices of $d$ and $d'$, a flip $d\to d'$ belongs to one of the following four types:



\textbf{Type 1.} $BX\to DY$, where $X=C$ or $u_i$, $Y=A$ or $v_j\,(1\leq i,j\leq n)$

\textbf{Type 2.} $BX\to AY$, where $X=C$ or $u_i$, $Y=D$ or $u_j$, where $i>j$ if $X=u_i$ and $Y=u_j$

\textbf{Type 3.} $CX\to AY$, where $X=B$ or $v_i$, $Y=D$ or $u_j\,(1\leq i,j\leq n)$

\textbf{Type 4.} $CX\to DY$, where $X=B$ or $v_i$, $Y=A$ or $v_j$, where $i<j$ if $X=u_i$ and $Y=u_j$

If there is a flip of Type 2 in $\mathcal{P}$, then there exists a triangulation in $\mathcal{P}$ containing the quadrilateral $ABXY$, and hence the diagonal $AB$ is in $\mathcal{P}$. Similarly, if there is a flip of Type 4, then $CD$ is in $\mathcal{P}$. Thus, it suffices to prove the lemma when all flips of the form $d\to d'$ have Type 1 or 3.  

If there is a flip of Type 1, then $BY$ and $DX$ are in $\mathcal{P}$. If $Y=A$ or $X=C$, we are done. So we assume that each flip of Type 1 has the form $Bu_i\to Dv_j$, and associate this flip to diagonals $Bv_j$ and $Du_i$ which are in $\mathcal{P}$. Similarly, we assume each flip of Type 3 has the form $Cv_j\to Au_i$ and associate this flip to diagonals $Cu_i$ and $Av_j$ which are in $\mathcal{P}$. 

Note that the diagonals associated to Type 1 and Type 3 flips are mutually distinct. These are all extra diagonals except possibly $Bv_1, Du_1, Cu_n$ and $Av_n$, which are the edges of $\Sigma_n$. Thus, by Lemma \ref{lem3.2} and our assumption, there are at least $2(n+1)-4=2n-2$ extra diagonals in $\mathcal{P}$. But by our assumption again, the flips $Bu_i\to Dv_j$ or $Cv_j\to Au_i$ will introduce at least an extra diagonal $u_iv_j$ for some $i$ and $j$. Consequently, there are at least $2n-2+1=2n-1$ extra diagonals in $\mathcal{P}$.

By Lemma \ref{lem2.2}, $\mathcal{P}$ has at least $(2n+4)-3+(2n-1)=4n$ flips. But $4n>3n+1$ since $n\geq 2$. This is a contradiction to the minimality in length of $\mathcal{P}$ by Lemma \ref{lem3.1}. Thus, the assumption is wrong, and $AB$ or $CD$ is in $\mathcal{P}$.
\end{proof}

\begin{lemma}\label{lem3.4}
$\text{flip}(\mathcal{T}^+_n,\mathcal{T}^-_n)=3n+1$.
\end{lemma}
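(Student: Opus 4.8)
The goal is to upgrade the inequality $\text{flip}(\mathcal{T}^+_n,\mathcal{T}^-_n)\leq 3n+1$ from Lemma~\ref{lem3.1} to an equality, so what remains is the lower bound $\text{flip}(\mathcal{T}^+_n,\mathcal{T}^-_n)\geq 3n+1$. By Lemma~\ref{lem3.3}, the shortest path $\mathcal{P}$ contains $AB$ or $CD$; without loss of generality (the configuration has an evident symmetry swapping $A\leftrightarrow D$, $B\leftrightarrow C$, $u_i\leftrightarrow v_i$) assume $AB\in\mathcal{P}$. The plan is to argue that the presence of $AB$ forces enough ``wasted'' work that the path cannot be shorter than $3n+1$.

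First I would pin down where $AB$ sits. Since $AB$ is in neither $\mathcal{T}^+_n$ nor $\mathcal{T}^-_n$, it is an extra diagonal, and it must be created by some flip and later destroyed by another; moreover, at the moment $AB$ is present the polygon is split into two parts, one of which (the part on the side of the $u_i$'s, say) must be triangulated from the $\mathcal{T}^+_n$-pattern into something and then re-triangulated toward the $\mathcal{T}^-_n$-pattern. I would use Lemma~\ref{lem3.2} together with a refined count: among the $n+1$ (or more) diagonals of $\mathcal{T}^+_n$ flipped directly to $\mathcal{T}^-_n$, only a limited number can be ``near'' $AB$, so the diagonals incident to $C$ on the far side require an extra flip each, much as in the type analysis of Lemma~\ref{lem3.3}. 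Concretely, I expect to show that once $AB\in\mathcal{P}$, at least one of the diagonals $Cv_i$ (which are in $\mathcal{T}^+_n$) cannot be flipped directly to $\mathcal{T}^-_n$, forcing at least $n+2$ total ``second flips'' beyond the direct ones, i.e. the bound $4n+2-x$ from Lemma~\ref{lem3.2} gets improved because $x\leq n+1$ is now an equality but some of the remaining $2n+1-x=n$ diagonals need more than $2$ flips, or an extra diagonal beyond the accounted ones must appear.

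The cleanest route is probably via extra diagonals and Lemma~\ref{lem2.2}: I would show that if $AB\in\mathcal{P}$ then $\mathcal{P}$ contains at least $n$ extra diagonals (one being $AB$ itself, plus essentially one ``$Cu_j$-type'' or ``$u_iv_j$-type'' diagonal for each step needed to dismantle the $C$-fan across the $AB$ cut), whence by Lemma~\ref{lem2.2} the number of flips is at least $(2n+4)-3+n = 3n+1$. Lemma~\ref{lem2.3} should be the tool that prevents the path from ``cheating'' by reusing triangles: any triangle formed by three path-diagonals actually appears in the path, which constrains how the $AB$-containing triangulations can interface with the fans at $B$ and $C$ and thus forces the extra diagonals to be genuinely distinct.

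The main obstacle I anticipate is precisely this extra-diagonal bookkeeping: showing that the diagonals forced into $\mathcal{P}$ by the presence of $AB$ are mutually distinct, are genuinely extra (not edges of $\Sigma_n$ and not in $\mathcal{T}^\pm_n$), and number at least $n$. The symmetry reduces two cases to one, and Lemmas~\ref{lem3.2} and~\ref{lem2.3} do most of the structural work, but the delicate part is the case split according to which flips of $\mathcal{P}$ are of Types 1--4 versus flips internal to one side of $AB$, and tracking that none of these can be double-counted against the $n+1$ direct flips already committed by Lemma~\ref{lem3.2}.
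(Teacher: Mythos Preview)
Your plan is too vague at the point where it matters. You correctly reduce to showing at least $n$ extra diagonals once $AB\in\mathcal{P}$, but you never identify which $n$ diagonals these are or why they must appear; the phrases ``I expect to show'' and ``should be the tool'' flag exactly the missing step. The type-analysis machinery from Lemma~\ref{lem3.3} does not obviously produce $n$ distinct extra diagonals from the single hypothesis $AB\in\mathcal{P}$, and your own final paragraph concedes that the bookkeeping is delicate and unresolved.

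The paper's proof avoids this bookkeeping entirely by using Lemma~\ref{lem2.3} in a much more concrete way than you propose. Since $AB\in\mathcal{P}$, $AD\in\mathcal{T}^-_n\subset\mathcal{P}$, and $BD$ is in $\mathcal{P}$, these three form a triangle, so Lemma~\ref{lem2.3} gives an actual triangulation $\mathcal{T}$ on $\mathcal{P}$ containing $\triangle ABD$. Because $\mathcal{T}$ and $\mathcal{T}^-_n$ share the diagonal $AD$, Lemma~\ref{lem2.1} lets one rearrange the sub-path $\mathcal{T}\to\mathcal{T}^-_n$ so that all flips on the $u$-side happen first; this produces an \emph{explicit} intermediate triangulation $\mathcal{T}'$ (containing $\triangle ABD$ together with all $Au_i$) through which some shortest path passes. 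Now the lower bound falls out from two trivial crossing counts: every one of the $2n+1$ diagonals of $\mathcal{T}^+_n$ crosses something in $\mathcal{T}'$, so $\text{flip}(\mathcal{T}^+_n,\mathcal{T}')\ge 2n+1$; and every $Dv_j\in\mathcal{T}^-_n$ crosses $AB\in\mathcal{T}'$, so $\text{flip}(\mathcal{T}',\mathcal{T}^-_n)\ge n$. Summing gives $3n+1$. The key idea you are missing is not a sharper extra-diagonal count but the use of Lemma~\ref{lem2.3} to materialize a concrete waypoint $\mathcal{T}'$ that splits the problem into two easy halves.
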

\begin{proof}
By Lemma \ref{lem3.3}, $AB$ or $CD$ is in $\mathcal{P}$. We assume that $AB$ is in $\mathcal{P}$, and the other case can be proved by symmetry.


Note that $AB,AD$ and $BD$ form a triangle. By Lemma \ref{lem2.3}, $\triangle ABD$ belongs to a triangulation $\mathcal{T}$ in $\mathcal{P}$. 

Let $\Sigma_v$ and $\Sigma_u$ be the polygons obtained by cutting $\Sigma$ along $AD$, where $\Sigma_v$ contains $v_j$'s and $\Sigma_u$ contains $u_i$'s. Consider the sub-path of $\mathcal{P}$ from $\mathcal{T}$ to $\mathcal{T}^-_n$. Since $\mathcal{T}$ and $\mathcal{T}^-_n$ both contain $AD$, by Lemma \ref{lem2.1}, the diagonals flipped in this sub-path are in $\Sigma_v$ or $\Sigma_u$. Thus, we can rearrange the order of triangulations in this sub-path so that all diagonals in $\Sigma_u$ are flipped before any diagonal in $\Sigma_v$. When all diagonals in $\Sigma_u$ are flipped, $\mathcal{T}$ is transformed into a triangulation $\mathcal{T}'$ containing $\triangle ABD$ and all the diagonals $Au_i\,(1\leq i\leq n)$, as in Figure \ref{fig3}. Since this modified flip path is also shortest, $\text{flip}(\mathcal{T}^+_n,\mathcal{T}^-_n)=\text{flip}(\mathcal{T}^+_n,\mathcal{T}')+\text{flip}(\mathcal{T}',\mathcal{T}^-_n)$.  

\begin{figure}[h]
\centering
\includegraphics[width=1.0\textwidth]{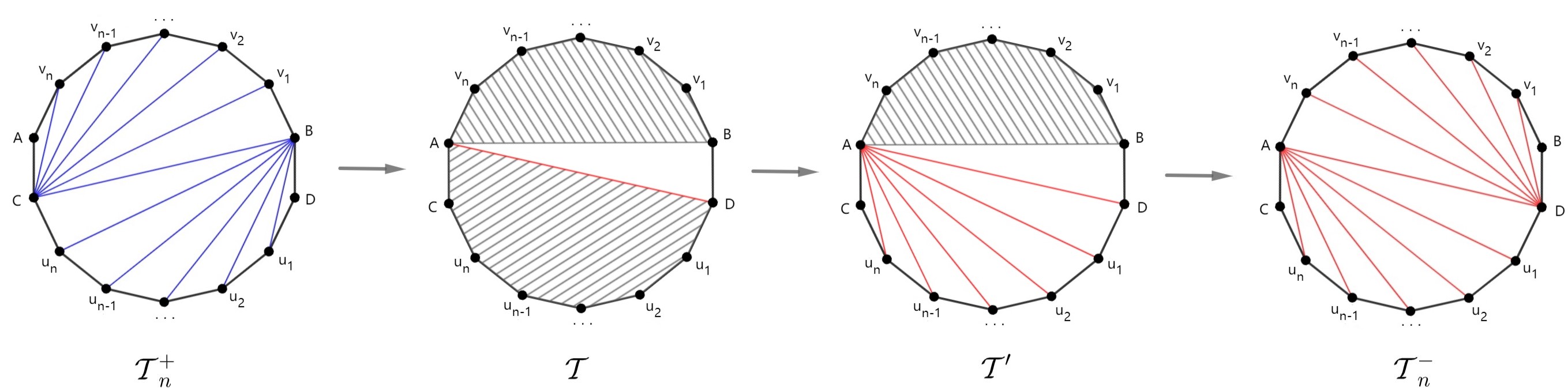}
\caption{A modified shortest flip path from $\mathcal{T}^+_n$ to $\mathcal{T}^-_n$ passing through $\mathcal{T}'$. Each shaded region represents some triangulation, and the two shaded regions with the same pattern represent the same triangulation.}
\label{fig3}
\end{figure}

To compute $\text{flip}(\mathcal{T}^+_n,\mathcal{T}')$,  consider an arbitrary flip path from $\mathcal{T}^+_n$ to $\mathcal{T}'$. Note that all diagonals in $\mathcal{T}^+_n$ cross some diagonals in $\mathcal{T}'$. Since there are $2n+1$ diagonals in $\mathcal{T}^+_n$, at least $2n+1$ flips are required in the flip path to move all of them. Hence $\text{flip}(\mathcal{T}^+_n,\mathcal{T}')\geq 2n+1$.

Similarly, to compute $\text{flip}(\mathcal{T}',\mathcal{T}^-_n)$, consider an arbitrary flip path from $\mathcal{T}^-_n$ to $\mathcal{T}'$. Since all the diagonals $Dv_j\,(1\leq j\leq n)$ in $\mathcal{T}^-_n$ cross $AB$ in $\mathcal{T}'$, at least $n$ flips in this path are required to move all the $Dv_j$'s. Therefore, $\text{flip}(\mathcal{T}',\mathcal{T}^-_n)\geq n$. 

Combined, we have $\text{flip}(\mathcal{T}^+_n,\mathcal{T}^-_n)=\text{flip}(\mathcal{T}^+_n,\mathcal{T}')+\text{flip}(\mathcal{T}',\mathcal{T}^-_n)\geq (2n+1)+n=3n+1$. Together with Lemma \ref{lem3.1}, we get $\text{flip}(\mathcal{T}^+_n,\mathcal{T}^-_n)=3n+1$.
\end{proof}

Let $\tau_n$ be the topological triangulation of the sphere determined by $\mathcal{T}^+_n$ and $\mathcal{T}^-_n$. We will show that $\text{tet}(\tau_n)= 2n+3$. 

Take a different Hamiltonian cycle in $\tau_n$ so that we can cut $\tau_n$ along the cycle into two triangulations that are topologically $\widetilde{\mathcal{T}}^+_n$ and $\widetilde{\mathcal{T}}^-_n$ in Figure \ref{fig4} (left). Observe from Figure \ref{fig5} (left) that there is a flip path from $\widetilde{\mathcal{T}}^+_n$ to $\widetilde{\mathcal{T}}^-_n$ containing only two extra diagonals $AB$ and $CD$, hence $2n+3$ flips by Lemma \ref{lem2.2}. Denote this flip path by  $\widetilde{\mathcal{P}}$. Then  $\widetilde{\mathcal{P}}$ gives rise to a tetrahedral decomposition extending $\tau_n$, so $\text{tet}(\tau_n)\leq 2n+3$.

\begin{figure}[h]
\centering
\includegraphics[width=0.72\textwidth]{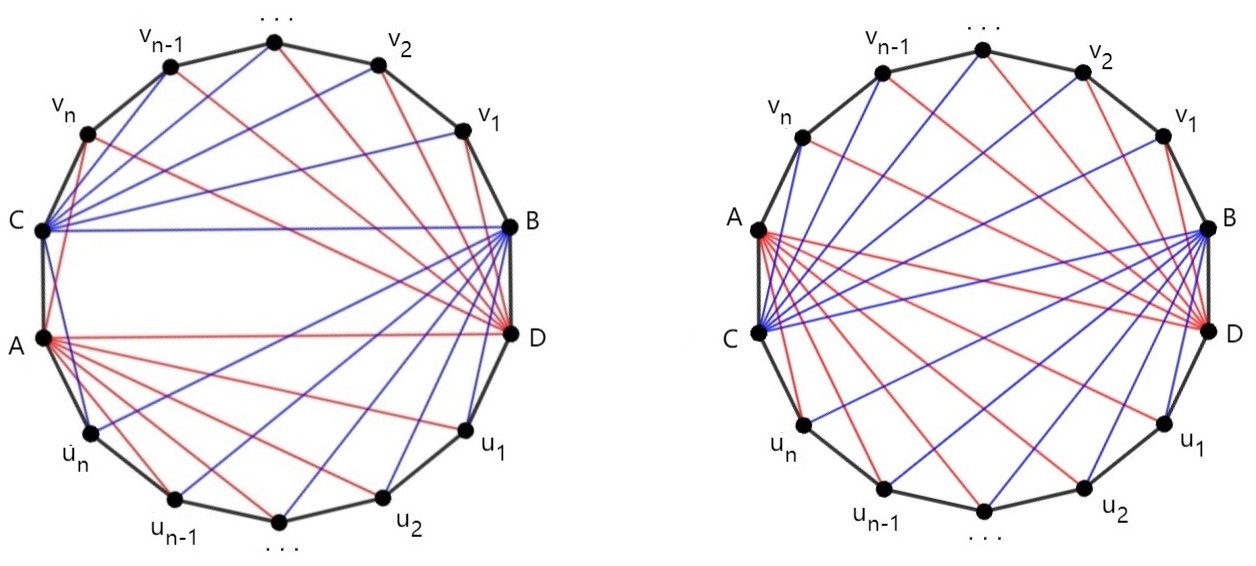}
\caption{Left: $\widetilde{\mathcal{T}}^+_n$ (blue) and $\widetilde{\mathcal{T}}^-_n$ (red). Right: $\mathcal{T}^+_n$ (blue) and $\mathcal{T}^-_n$ (red). Both pairs of triangulations compose $\tau_n$. Note that the vertex-labels are different in the two pictures.}
\label{fig4}
\end{figure}

\begin{figure}[h]
\centering
\includegraphics[width=0.72\textwidth]{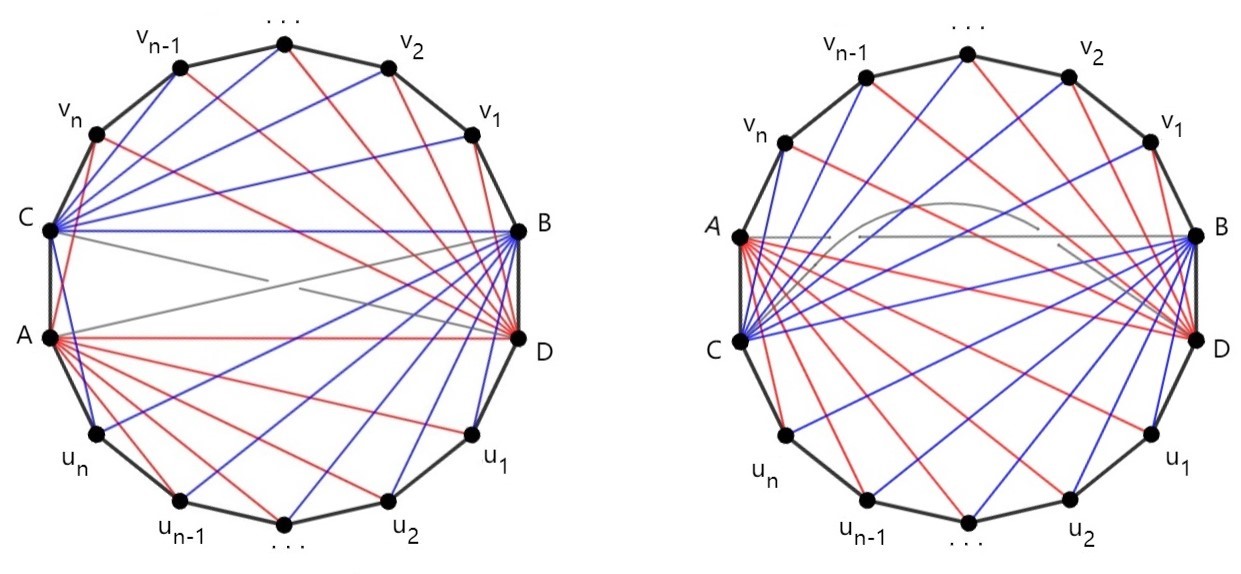}
\caption{Left: a flip path from $\widetilde{\mathcal{T}}^+_n$ to $\widetilde{\mathcal{T}}^-_n$ containing two extra diagonals $AB$ and $CD$. Right: the triangulation with vertices $A,B,C,D$ in the minimal decomposition is bent to preserve topology.}
\label{fig5}
\end{figure}

\begin{lemma}\label{lem3.5}
$\text{tet}(\tau_n)= 2n+3$.
\end{lemma}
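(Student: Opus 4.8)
\textbf{Proof proposal for Lemma \ref{lem3.5}.}

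The plan is to establish the matching lower bound $\text{tet}(\tau_n)\geq 2n+3$, since the upper bound was already obtained from the flip path $\widetilde{\mathcal P}$. The key idea is that every tetrahedral decomposition $\mathscr T$ extending $\tau_n$ arises from \emph{some} flip path, not necessarily a shortest one, between \emph{some} pair of triangulations of a disk cut out of $\tau_n$ along a Hamiltonian cycle; the ``bending'' mentioned in Figure \ref{fig5} (right) is exactly the device that lets us reinterpret a decomposition that does not respect the convex position of $\mathcal T^+_n$ and $\mathcal T^-_n$ as one that respects the convex position of $\widetilde{\mathcal T}^+_n$ and $\widetilde{\mathcal T}^-_n$. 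So first I would make precise the correspondence: a minimal decomposition extending $\tau_n$, read off layer by layer from the side $\widetilde{\mathcal T}^+_n$ to the side $\widetilde{\mathcal T}^-_n$, yields a flip path $\widetilde{\mathcal P}'$ from $\widetilde{\mathcal T}^+_n$ to $\widetilde{\mathcal T}^-_n$ with $\text{tet}(\tau_n)$ flips. By Lemma \ref{lem2.2}, this path has $\text{tet}(\tau_n)-(2n+1)$ extra diagonals, so it suffices to show every flip path from $\widetilde{\mathcal T}^+_n$ to $\widetilde{\mathcal T}^-_n$ has at least two extra diagonals.

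Next I would count, as in Lemma \ref{lem3.2}, how many diagonals of $\widetilde{\mathcal T}^+_n$ can be flipped directly to a diagonal of $\widetilde{\mathcal T}^-_n$. The combinatorics of $\widetilde{\mathcal T}^+_n$ and $\widetilde{\mathcal T}^-_n$ (Figure \ref{fig4}, left) should show that at most $2n-1$ of the $2n+1$ diagonals of $\widetilde{\mathcal T}^+_n$ admit such a direct flip — the two ``long'' diagonals analogous to $AC$ and $BD$ in the original picture each require passing through an intermediate diagonal. Then, mimicking the bookkeeping of Lemma \ref{lem3.3}: if no extra diagonal appears, every diagonal of $\widetilde{\mathcal T}^+_n$ is flipped directly, contradicting the count; and if only one extra diagonal appears, then at most one of the two long diagonals can be ``rescued'' through it, while the other still has no route, forcing a second extra diagonal. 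Hence at least two extra diagonals, giving $\text{tet}(\tau_n)\geq (2n+4)-3+2 = 2n+3$.

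The step I expect to be the main obstacle is the first one: justifying rigorously that \emph{every} minimal tetrahedral decomposition extending $\tau_n$ can be sliced into a flip path between $\widetilde{\mathcal T}^+_n$ and $\widetilde{\mathcal T}^-_n$ (rather than between $\mathcal T^+_n$ and $\mathcal T^-_n$, or some other dissection), and that the ``bent'' realization of the $A,B,C,D$-triangulation in Figure \ref{fig5} (right) genuinely does not change the count of tetrahedra. In other words, the combinatorial content — a topological tetrahedral decomposition of the ball with boundary $\tau_n$ is the same data as a sequence of diagonal flips, once one fixes a Hamiltonian cycle on the boundary sphere to serve as the equator — must be stated carefully enough that choosing the \emph{other} Hamiltonian cycle is legitimate. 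Once that reduction is in hand, the rest is the same extra-diagonal accounting used in Lemmas \ref{lem3.2} and \ref{lem3.3}, applied to the simpler pair $\widetilde{\mathcal T}^{\pm}_n$, and I would expect it to go through without new ideas.
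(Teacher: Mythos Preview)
Your central reduction is not a technicality to be cleaned up --- it is false, and its falsity is precisely the content of the paper. You propose that a minimal tetrahedral decomposition extending $\tau_n$ can be ``read off layer by layer'' as a flip path from $\widetilde{\mathcal T}^+_n$ to $\widetilde{\mathcal T}^-_n$. But the correspondence between flip paths and tetrahedral decompositions runs only one way: every flip path yields a decomposition, not conversely. If your reduction held, it would give $\text{tet}(\tau_n)\geq\text{flip}(\widetilde{\mathcal T}^+_n,\widetilde{\mathcal T}^-_n)$, and combined with the trivial reverse inequality one would get $\text{tet}(\tau)=\text{flip}$ for \emph{every} Hamiltonian splitting of \emph{every} $\tau$ --- exactly what Theorem~\ref{thm} disproves. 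Changing the Hamiltonian cycle does not help: a general triangulation of the $3$-ball need not be shellable, and even a shellable one need not admit a shelling that proceeds monotonically from one hemisphere to the other. So the step you flag as ``the main obstacle'' is not merely hard; it cannot be carried out.

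The paper's proof is entirely different and avoids flip paths altogether. It argues directly on any hypothetical decomposition with $\le 2n+2$ tetrahedra: since $\tau_n$ has $4n+4$ boundary triangles and (by inspection) no three of them can sit on a single tetrahedron, each of the $\le 2n+2$ tetrahedra carries exactly two boundary faces. Counting faces and applying the Euler characteristic of the ball then forces exactly one interior edge, which must lie in every tetrahedron; this makes $\tau_n$ a bipyramid, contradicting its actual combinatorics. The moral is that the lower bound on $\text{tet}$ has to be proved by arguments intrinsic to triangulations of the ball, not by translating back into the flip world.
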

\begin{proof}
We first show that $\text{tet}(\tau_n)> 2n+1$. Note that $\text{tet}(\tau_n)$ has $4n+4$ faces. If there exists a tetrahedral decomposition extending $\tau_n$ with $\leq 2n+1$ tetrahedra, at least one tetrahedron contains three faces in $\tau_n$, but there is no such tetrahedron by observation.

If there is a tetrahedral decomposition extending $\tau_n$ with $2n+2$ tetrahedra, since no three faces in $\tau_n$ belong to one tetrahedra, each tetrahedron must contain exactly two faces in $\tau_n$, and the remaining two faces are shared with two other tetrahedra. Thus, there are $2(2n+2)+(2n+2)=6n+6$ faces in this tetrahedral decomposition. By computing the Euler characteristics, the number of edges in this tetrahedral decomposition is $6n+7$. Since $\tau_n$ has $6n+6$ edges, there is only one edge not in $\tau_n$. This edge is shared by all tetrahedra in the decomposition, implying that $\tau_n$ is a bipyramid, a contradiction.
\end{proof}

Combining Lemma \ref{lem3.4} and Lemma \ref{lem3.5}, we see that $\text{flip}(\mathcal{T}^+_n,\mathcal{T}^-_n)> \text{tet}(\tau_n)$ when $n>2$, and Theorem \ref{thm} is proved by taking the limit as $n$ goes to infinity. Figure \ref{fig5} (right) shows that this minimal tetrahedral decomposition extending $\tau_n$ does not give rise to a flip path from $\mathcal{T}^+_n$ to $\mathcal{T}^-_n$, since one tetrahedron must be bent to preserve the topology and cannot be a flipping tetrahedron.

\begin{remark}
So far, these are the most extreme examples we constructed in an attempt to maximize $\frac{\text{flip}(\mathcal{T}^+,\mathcal{T}^-)}{\text{tet}(\tau)}$. We conjecture that it is impossible for this ratio to exceed or even reach $\frac{3}{2}$. In addition, if there is only a small difference in the maximum and minimum degrees of vertices in $\tau$, then ${\text{flip}(\mathcal{T}^+,\mathcal{T}^-)}$ and ${\text{tet}(\tau)}$ should be pretty close.
\end{remark}

\section*{Some Additional Thoughts}

It is known that the diameter of the flip graph of a convex $n$-gon is $2n-10$ for $n>12$. The proof is combinatorical for all $n>12$ \cite{P} and involves hyperbolic geometry for large $n$ \cite{STT}. It would be interesting to look for pairs of triangulations $(\mathcal{T}^+_n,\mathcal{T}^-_n)$ achieving this diameter. Since \cite{P} focused on the existence of such examples, and \cite{STT} found some examples that worked only for large $n$ and involved computation of hyperbolic volume, it would be helpful to have some criteria that can generate larger number of such examples and are easier to check.

A \textit{cone-type triangulation} at a vertex $v$ is a triangulation in which every tetrahedron contains $v$ and a face in $\mathcal{\tau}$.  Cone-type triangulations can be converted to flip paths. Since $2n-10$ is the number of tetrahedra in a cone-type triangulation at a vertex of degree $6$, we propose that it is a minimal tetrahedral decomposition extending $\tau$ for certain $\tau$. More specifically, we make the following conjecture:

\begin{conjecture}

 Let $\tau$ be a spherical triangulation with vertices of degree $5$ and $6$ only. We say that a cycle in $\tau$ is bad if it has length $l$ and separates two vertices of degree $>l$ in $\tau$. If there is no bad cycle in $\tau$, then the cone-type triangulation at a degree-$6$ vertex is minimal, and $\text{flip}(\mathcal{T}^+_n,\mathcal{T}^-_n)=2n-10$.
\end{conjecture}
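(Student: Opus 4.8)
The plan is to reduce the two equalities in the conjecture to a single lower bound on $\text{tet}(\tau)$ and then to extract that bound from the no-bad-cycle hypothesis. Throughout, let $n$ be the number of vertices of $\tau$, so $\tau$ has $2n-4$ faces and $3n-6$ edges, and by the Euler relation $\sum_v(6-\deg v)=12$ there are exactly $12$ vertices of degree $5$ and $n-12$ of degree $6$; I take $n\geq 13$ so that a degree-$6$ vertex exists. For an arbitrary tetrahedral decomposition $\mathscr{T}$ extending $\tau$, write $a_i$ for the number of tetrahedra having exactly $i$ faces on the boundary $\tau$. Since any three faces of a tetrahedron share a common vertex, a tetrahedron with $\geq 3$ boundary faces would force a boundary vertex of degree $3$, impossible because $\tau$ has minimum degree $5$; hence $a_3=a_4=0$. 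Counting boundary face--tetrahedron incidences (each boundary face lies in exactly one tetrahedron) gives $a_1+2a_2=2n-4$, and since $T=a_0+a_1+a_2$ is the total number of tetrahedra we obtain

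\begin{equation*}
T=2n-4+a_0-a_2 .
\end{equation*}

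Thus $\text{tet}(\tau)\geq 2n-10$ is \emph{equivalent} to the assertion that every decomposition satisfies $a_2-a_0\leq 6$, and the whole conjecture will follow once this inequality is proved.

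Next I would dispatch the upper bound and the reduction of the flip statement. The cone-type triangulation at a degree-$6$ vertex $v$ has one tetrahedron for each of the $2n-4-6=2n-10$ faces of $\tau$ not containing $v$, so $\text{tet}(\tau)\leq 2n-10$; combined with the lower bound this forces minimality of the cone. To make sense of $\text{flip}(\mathcal{T}^+,\mathcal{T}^-)$ at all, I note that the absence of bad $3$-cycles means $\tau$ has no separating triangle (every non-facial triangle separates vertices of degree $>3$), so $\tau$ is a $4$-connected planar triangulation and hence Hamiltonian by Whitney's theorem; cutting along a Hamiltonian cycle realizes $\tau$ as the sphere determined by a pair $\mathcal{T}^+,\mathcal{T}^-$ of triangulations of an $n$-gon. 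Since cone-type triangulations convert to flip paths, the cone yields a flip path of length $2n-10$, so $\text{flip}(\mathcal{T}^+,\mathcal{T}^-)\leq 2n-10$. Because $\text{flip}\geq\text{tet}$ always, once $\text{tet}(\tau)=2n-10$ is known we get $\text{flip}(\mathcal{T}^+,\mathcal{T}^-)=2n-10$, and these pairs attain the diameter $2n-10$.

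For the key inequality $a_2-a_0\leq 6$ I would first record why the hypothesis is the right one, since this guides the argument. Suppose $\tau$ had a bad cycle $C$ of length $l$ separating a degree-$d_p$ vertex $p$ from a degree-$d_q$ vertex $q$ with $d_p,d_q>l$. Coning the two sides of $C$ from $p$ and from $q$ and filling the resulting drum by the $l$ tetrahedra $pq\cdot(\text{edge of }C)$ produces a valid decomposition with $(2n-4)-d_p-d_q+l$ tetrahedra; whenever $d_p+d_q-l\geq 7$ --- which holds for every separating triangle and for a $5$-cycle separating two degree-$6$ vertices --- this is at most $2n-11$, so the cone is not minimal. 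This confirms that bad cycles are precisely the obstruction, and the converse is what remains: starting from any decomposition with $T\leq 2n-11$, equivalently $a_2-a_0\geq 7$, I would try to manufacture a bad cycle and reach a contradiction. Each $2$-boundary-face tetrahedron is a ``fold'' over a boundary edge $ab$ (its two boundary faces are the two $\tau$-triangles on $ab$), and the idea is that a net surplus of at least seven folds cannot be dispersed over the whole sphere: the interface between the interior $2$-complex of $\mathscr{T}$ and the boundary must pinch through a short cycle that separates two vertices of boundary degree exceeding the cycle's length.

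The hard part will be exactly this extraction. For double-cone decompositions the separating cycle is handed to us, but a general cheap decomposition may have interior vertices and an intricate interior $2$-complex, so I expect the main difficulty to be locating a genuinely short separating cycle and then certifying that high-degree vertices lie on both of its sides. I anticipate needing a discrete isoperimetric or sweep-out argument on $\mathscr{T}$, or --- in the spirit of the Gromov-norm estimate discussed in the introduction --- an explicit linear-programming dual certificate that assigns weights $+1,0,-1$ to tetrahedra according to their number of boundary faces and bounds the total by $6$ through a discrete Gauss--Bonnet accounting, the constant $6$ matching $3\chi(S^2)$. Turning either heuristic into a proof valid for \emph{all} decompositions, with no hidden appeal to geometric rather than purely combinatorial structure, is the crux, and is why the statement is posed as a conjecture rather than a theorem.
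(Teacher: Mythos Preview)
The statement you are addressing is labelled a \emph{Conjecture} in the paper, and the paper offers no proof of it; after stating it, the authors only illustrate the \emph{necessity} of the no-bad-cycle hypothesis via the $27$-vertex example with a separating $5$-cycle. So there is no ``paper's own proof'' to compare your attempt against.

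Your reductions are clean and correct as far as they go: the face--incidence count $T=2n-4+a_0-a_2$, the exclusion of $a_3,a_4$ via the degree-$5$ lower bound, the cone upper bound $2n-10$, and the use of Whitney/Tutte (no separating triangle $\Rightarrow$ $4$-connected $\Rightarrow$ Hamiltonian) to make $\mathcal{T}^+,\mathcal{T}^-$ well defined are all sound. Your double-cone computation recovers exactly the necessity direction the paper sketches. But the heart of the matter---the inequality $a_2-a_0\le 6$ for \emph{every} tetrahedral decomposition extending $\tau$---you do not prove; you describe two heuristics (a sweep-out/isoperimetric argument and a Gauss--Bonnet-style LP dual) and then explicitly concede that turning either into a proof ``is why the statement is posed as a conjecture.'' That concession is accurate: your proposal is a strategy, not a proof, and the gap you identify is precisely the open part of the problem. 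Nothing in the paper closes it either.
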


The absence of bad cycles is necessary to make the conjecture hold. Otherwise, no cone-type triangulation is minimal. For example, let $\tau$ be the triangulation of the sphere obtained by gluing two copies of the graph in Figure \ref{fig6} along the boundary. Then $\tau$ has $27$ vertices and a bad cycle of length $5$ (thickened in Figure \ref{fig6}).

\begin{figure}[h]
\centering
\includegraphics[width=0.39\textwidth]{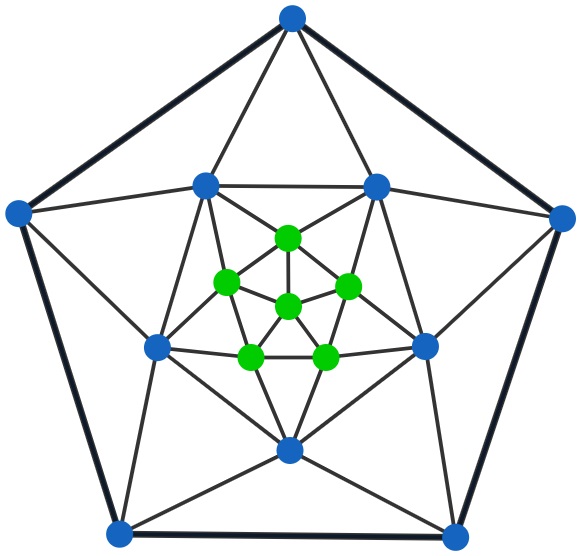}
\caption{Gluing two copies of this graph along the boundary gives rise to $\tau$. Vertices of degree $5$ and $6$ in $\tau$ are colored green and blue.}
\label{fig6}
\end{figure}

The cone-type triangulation at a degree $6$ vertex has $2n-10=44$ tetrahedra. However, we can create a tetrahedral decomposition extending $\tau$ with fewer tetrahedra: For each copy of this graph, choose a vertex $v$ of degree $6$ in the interior. Consider the set of tetrahedra containing $v$ and a face in this graph (and similarly for the other copy). Removing these tetrahedra from both copies yields a pentagonal bi-pyramid, which can be triangulated using $5$ tetrahedra. This tetrahedral decomposition uses $19\times 2+5=43$ tetrahedra, so cone-type triangulations are not minimal.

We thank Peter Doyle again for his advice in refining this conjecture.

\end{document}